\documentclass[11pt,twoside,a4paper]{amsart}
\usepackage{amssymb}
\usepackage{amsmath,amscd}
\usepackage[initials,nobysame,shortalphabetic]{amsrefs}
%\usepackage{a4wide}
%\date{\today}
\usepackage{verbatim}

%%%%%%

\def\dbar{\bar\partial}

\def\C{{\mathbb C}}

\newcommand{\Com}[1]{}

\def\be{\begin{equation}}
\def\ee{\end{equation}}

\newtheorem{thm}{Theorem}[section]
\newtheorem{lma}[thm]{Lemma}
\newtheorem{cor}[thm]{Corollary}
\newtheorem{prop}[thm]{Proposition}

\theoremstyle{definition}

\newtheorem{df}{Definition}

\theoremstyle{remark}

\newtheorem{preremark}{Remark}
\newtheorem{preex}{Example}

\newenvironment{remark}{\begin{preremark}}{\end{preremark}}

\numberwithin{equation}{section}

\ifdefined\custompagenr
    \setcounter{page}{\custompagenr}
\fi

\begin{document}

\title[]{The Nakano vanishing theorem and a vanishing theorem of Demailly-Nadel type for holomorphic vector bundles}
%%

% borta för licen
%\date{\today}

\author{Hossein Raufi}

\address{H. Raufi\\Department of Mathematics\\Chalmers University of Technology and the University of Gothenburg\\412 96 G\"OTEBORG\\SWEDEN}

\email{raufi@chalmers.se}

%\subjclass{}

%\keywords{}

%\thanks{}

\begin{abstract}
We prove the classical Nakano vanishing theorem with H\"or\-man\-der $L^2$-estimates on a compact K\"ahler manifold using Siu's so called $\partial\dbar$-Boch\-ner-Kodaira method, thereby avoiding the K\"ahler identities completely. We then introduce singular hermitian metrics on holomorphic vector bundles, and proceed to prove a vanishing theorem of Demailly-Nadel type for these in the special case where the base manifold is a Riemann surface.
\end{abstract}

\maketitle

\section{Introduction}
\noindent Let $(X,\omega)$ be a compact K\"ahler manifold with dim$_{\C}X=n$ and let $(E,h)$ be a hermitian, holomorphic vector bundle over $X$ with rank$E=r$. The celebrated Nakano vanishing theorem, which first appeared in \cite{N2}, generalizes the Kodaira vanishing theorem to this setting. Following H\"ormander \cite{H} one can apply $L^2$ methods to prove this vanishing theorem in such a way that one not only obtains existence of solutions to the inhomogenous $\dbar$-equation, but also very accurate and useful $L^2$ estimates for the solutions. More precisely we have the following theorem.

\begin{thm}\label{thm:1.1}
Let $X$ be a compact K\"ahler manifold and let $E$ be a holomorphic vector bundle over $X$. Suppose that $f$ is an $E$-valued, $\dbar$-closed, $(n,p)$-form with $p\geq1$. If $E$ can be equipped with a hermitian metric $h$ which is strictly positively curved in the sense of Nakano with $i\Theta^h\geq_{Nak.}\delta\omega\otimes I$ for some $\delta>0$, then there exists an $E$-valued $(n,p-1)$-form $u$ such that
$$\dbar u=f$$
in the sense of distributions, and furthermore
$$\int_X\|u\|^2dV_\omega\leq\frac{1}{\delta p}\int_X\|f\|^2dV_\omega,$$
provided that the right hand side is finite.
\end{thm}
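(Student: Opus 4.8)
The plan is to run the classical Hörmander--Andreotti--Vesentini $L^2$ scheme, replacing only the Bochner--Kodaira--Nakano identity --- normally extracted from the Kähler commutation relations $[\Lambda,\dbar]=-i(D')^*$ and $[\Lambda,D']=i\dbs$ --- by a direct Stokes-type derivation, which is the point of the $\partial\dbar$-Bochner--Kodaira method announced in the abstract. \emph{Step 1: reduction to an a priori estimate.} Work in the Hilbert spaces $L^2_{n,\bullet}(X,E)$ with the norm induced by $h$ and $dV_\omega$; on the compact manifold $X$ the operator $\dbar$ is densely defined and closed, and it is classical (Andreotti--Vesentini) that smooth $E$-valued forms are dense in $\Dom\dbar\cap\Dom\dbs$ for the graph norm, so any inequality proved for smooth forms extends to that whole domain. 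It therefore suffices to establish
\[
\delta p\int_X\|g\|^2\,dV_\omega\ \leq\ \int_X\|\dbar g\|^2\,dV_\omega+\int_X\|\dbs g\|^2\,dV_\omega
\qquad\text{for all smooth }E\text{-valued }(n,p)\text{-forms }g.
\]
Granting this: given $f$ as in the statement (so $\dbar f=0$ and $\int_X\|f\|^2\,dV_\omega<\infty$), decompose any $g\in\Dom\dbs$ of bidegree $(n,p)$ as $g=g_1+g_2$ with $g_1\in\Ker\dbar$ and $g_2\in(\Ker\dbar)^\perp\subseteq(\Im\dbar)^\perp=\Ker\dbs$ (for the appropriate restrictions of $\dbar$). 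Then $\dbs g=\dbs g_1$ and, since $f\in\Ker\dbar$, $\langle f,g\rangle=\langle f,g_1\rangle$; applying the a priori estimate to $g_1$ (on which $\dbar g_1=0$) gives $|\langle f,g\rangle|^2\leq\|f\|^2\|g_1\|^2\leq(\delta p)^{-1}\|f\|^2\|\dbs g\|^2$. Hence $\dbs g\mapsto\langle f,g\rangle$ is a well-defined bounded functional of norm $\leq(\delta p)^{-1/2}\|f\|$ on $\Im\dbs$, and the Riesz representation theorem produces $u\in L^2_{n,p-1}(X,E)$ with $\dbar u=f$ in the sense of distributions and $\int_X\|u\|^2\,dV_\omega\leq(\delta p)^{-1}\int_X\|f\|^2\,dV_\omega$.

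\emph{Step 2: the $\partial\dbar$-Bochner--Kodaira identity.} Let $D=D'+\dbar$ be the Chern connection of $(E,h)$ and fix a smooth $E$-valued $(n,p)$-form $g$. Instead of invoking the Kähler identities, I would integrate over the closed manifold $X$ an exact top-degree form $d\alpha$, where $\alpha$ is a sesquilinear expression of bidegree $(n,n-1)+(n-1,n)$ assembled from $g,\dbar g,D'g,\dbs g$ by means of $D$, the pairing furnished by $h$, and contraction with $\omega$; then $\int_X d\alpha=0$, and the Kähler hypothesis $d\omega=0$ forces the torsion terms --- precisely those that the commutators with $\Lambda$ carry in the usual proof --- to cancel identically. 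Rearranging the outcome yields
\[
\int_X\|\dbar g\|^2\,dV_\omega+\int_X\|\dbs g\|^2\,dV_\omega\ =\ \int_X\bigl(\|D'g\|^2+\|(D')^*g\|^2\bigr)\,dV_\omega+\int_X\langle[i\Theta^h,\Lambda]g,g\rangle\,dV_\omega.
\]
Since $g$ has holomorphic degree $n$ one even has $D'g=0$; in any case the first integrand is $\geq0$, so $\int_X\langle[i\Theta^h,\Lambda]g,g\rangle\,dV_\omega\leq\int_X\|\dbar g\|^2\,dV_\omega+\int_X\|\dbs g\|^2\,dV_\omega$.

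\emph{Step 3: Nakano positivity.} It remains to check the pointwise bound $\langle[i\Theta^h,\Lambda]g,g\rangle\geq\delta p\,\|g\|^2$. In a local holomorphic frame of $E$ and holomorphic coordinates orthonormal for $\omega$ at the given point, write $i\Theta^h=i\sum_{j,k}\Theta_{jk}\,dz_j\wedge d\bar z_k$ (with $\Theta_{jk}\in\End(E)$, $\Theta_{jk}^*=\Theta_{kj}$) and $g=\sum_{|I|=p}g_I\,dz\wedge d\bar z_I$; a routine computation identifies $\langle[i\Theta^h,\Lambda]g,g\rangle$ with $\sum_{|K|=p-1}\sum_{j,k}\langle\Theta_{jk}g_{kK},g_{jK}\rangle$ up to the fixed normalization relating the $(n,n)$-form pairing to $dV_\omega$. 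Since $i\Theta^h\geq_{Nak.}\delta\omega\otimes I$ means exactly $\sum_{j,k}\langle\Theta_{jk}\tau_k,\tau_j\rangle\geq\delta\sum_j\|\tau_j\|^2$ for every tuple $(\tau_j)$ in $E$, applying this with $\tau_j=g_{jK}$ and summing over $K$ --- each length-$p$ multi-index $I$ occurring $p$ times in the form $jK$ --- produces the constant $\delta p$. (Equivalently, $[\omega\wedge\,\cdot\,,\Lambda]$ acts on bidegree $(n,p)$ as multiplication by $p$, so Nakano positivity gives $[i\Theta^h,\Lambda]\geq\delta p$ there.) Integrating over $X$ and combining with Step 2 gives the a priori estimate of Step 1, which completes the proof.

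The real work is Step 2: one must organize the $\partial\dbar$-Bochner--Kodaira computation so that, using only $\int_X d\alpha=0$ and $d\omega=0$, the torsion and Ricci-type contributions that the Kähler identities normally dispose of cancel out explicitly, leaving the clean curvature term $\langle[i\Theta^h,\Lambda]g,g\rangle$ --- this cancellation being the special virtue of bidegree $(n,p)$ that the theorem exploits.
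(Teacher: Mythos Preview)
Your Steps~1 and~3 are standard and correct, and they match the paper (which packages Step~1 as an abstract Hilbert-space criterion, Theorem~\ref{thm:3.1}, together with a Friedrichs-type density lemma, and carries out your Step~3 computation as inequality~\eqref{eq:BasicIneq}). The gap is Step~2. You never actually perform the $\partial\dbar$-Bochner--Kodaira computation: you say you ``would integrate $d\alpha$'' for an unspecified sesquilinear expression $\alpha$, assert that the torsion terms ``cancel identically'', and then explicitly concede at the end that this is ``the real work''. As it stands this is a plan, not a proof --- no $\alpha$ is named, no derivative is taken, no cancellation is exhibited.

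Moreover, the target identity you write,
\[
\int_X\|\dbar g\|^2+\int_X\|\dbs g\|^2=\int_X\|D'g\|^2+\int_X\|(D')^*g\|^2+\int_X\langle[i\Theta^h,\Lambda]g,g\rangle\,dV_\omega,
\]
is formulated with the Lefschetz adjoint $\Lambda$; this is exactly the K\"ahler-identity object the paper is designed to bypass. The paper's concrete mechanism is different: given the $(n,p)$-form (called $\alpha$ there, your $g$), write $g=\gamma\wedge\omega_p$ with $\gamma$ an $E$-valued $(n-p,0)$-form, and expand the $(n,n)$-form $i\partial\dbar\,c_{n-p}\langle\gamma,\gamma\rangle\wedge\omega_{p-1}$ directly using only compatibility of the Chern connection and $d\omega=0$. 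This produces the explicit pointwise identity~\eqref{eq:basic}, whose integral over $X$ is
\[
ic_{n-p}\int_X\langle\Theta\wedge\gamma,\gamma\rangle\wedge\omega_{p-1}+\int_X\|\dbar\gamma\|^2\,dV_\omega=\int_X\big(\|\dbs g\|^2+\|\dbar g\|^2\big)\,dV_\omega,
\]
with no $\Lambda$ in sight. Nakano positivity then bounds the curvature term from below by $\delta p\,\|\gamma\|^2\,dV_\omega=\delta p\,\|g\|^2\,dV_\omega$ via~\eqref{eq:BasicIneq}, which is your Step~3 rewritten in this language. So the missing ingredient in your Step~2 is precisely the choice of the $(n-p,n-p)$-form $c_{n-p}\langle\gamma,\gamma\rangle\wedge\omega_{p-1}$ to take $i\partial\dbar$ of; this is what makes the terms organize themselves, and the K\"ahler hypothesis enters only through $\dbar\gamma\wedge\omega_p=\dbar(\gamma\wedge\omega_p)=\dbar g$.
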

Here $dV_\omega:=\omega^n/n!$ and the norms of $u$ and $f$ are taken with respect to the metric $h$ and the K\"ahler form $\omega$; see section 2 where these notions are reviewed.

In the usual proofs of Theorem \ref{thm:1.1} the so called K\"ahler identities are a key ingredient. However in \cite{S1} Siu introduced a method which he calls the $\partial\dbar$-Bochner-Kodaira method and which manages to avoid the (rather non-intuitive) K\"ahler identities completely.

In Riemannian geometry the basic idea behind the Bochner method is (very vaguely) to calculate the Laplacian of the norm of forms. Then one can draw conclusions about the geometry by carefully analyzing the resulting expression and putting restrictions on the curvature of the metric. The straightforward adaptation of this method in our complex setting would then be to calculate and analyze
\be\label{eq:lapl}
\Delta\|\alpha\|^2
\ee
where $\alpha$ is an $E$-valued, $(n,p)$-form. However, it turns out that this approach will not work out well and so the historical approach to the vanishing theorem has been through the K\"ahler identities.

What Siu demonstrates in \cite{S1} (among other things) is that if the metric is dually, negatively curved in the sense of Nakano, an approach that is very similar to the classical Bochner method can be applied. The main idea is to let the $E$-valued $(0,q)$-form $\alpha$ remain form-valued, replace $\Delta$ by $i\partial\dbar$ and calculate
$$i\partial\dbar c_q\langle\alpha,\alpha\rangle\wedge\omega^{n-q-1}/(n-q-1)!$$
instead of (\ref{eq:lapl}). Here $c_q:=i^{q^2}$ is chosen to make the right hand side positive and $\langle,\rangle$ is the bilinear form on $E$-valued forms induced by $h$ (see section 2). In \cite{S1} section 3 it is shown that the resulting expression is equivalent to the classical Bochner-Kodaira-Nakano identity, and this is then used in section 4 to prove (a slightly more general version of) the Nakano vanishing theorem, but without any $L^2$ estimates. 

In \cite{B} Berndtsson shows that in the line bundle case, this method can be applied directly, without resorting to dual bundles, and he also derives the H\"ormander $L^2$ estimates. Here the situation is slightly more involved. Let $(L,\phi)$ be a positively curved line bundle over $X$ and let $\alpha$ be an $(n,p)$-form with values in $L$. It turns out that the appropriate counterpart of (\ref{eq:lapl}) in this case is
$$i\partial\dbar c_{n-p}\gamma_\alpha\wedge\gamma_\alpha\wedge\omega^{p-1}e^{-\phi}/(p-1)!$$ 
where $\gamma_\alpha$, (up to a constant), is the Hodge-$\ast$ of $\alpha$, i.e. an $L$-valued $(n-p,0)$-form such that
$$\alpha=\gamma_\alpha\wedge\omega^p/p!.$$

The first aim of this paper is to show that this latter approach works almost without change for forms with values in a vector bundle, thereby proving Theorem \ref{thm:1.1}. Our presentation will follow that of \cite{B} rather closely but we still believe that this is of interest since it simplifies the otherwise, in our view, rather messy proofs and gives a more natural approach to the pretty mysterious notion of being curved in the sense of Nakano. We hope that this illustrates the usefulness of the $\partial\dbar$-Bochner-Kodaira method, which we believe deserves to be more well-known. Another application of this method is given in \cite{R2} where we use it to prove a Ohsawa-Takegoshi type of extension theorem for sections of a vector bundle.

Once the compact K\"ahler case is in place, one can extend Theorem \ref{thm:1.1} to a large class of non-compact complex manifolds, namely those carrying some complete K\"ahler metric. In particular, Theorem \ref{thm:1.1} holds on Stein manifolds (see e.g. \cite{D1} Chapter VIII, Theorem 4.5).

For line bundles an important extension of Theorem \ref{thm:1.1} is the Demailly-Nadel vanishing theorem which states that Theorem \ref{thm:1.1} holds on any complete K\"ahler manifold even if we only assume that the metric is locally integrable. The curvature is then required to be positive or negative as a current, (\cite{D2},\cite{N1}).

In a previous article \cite{R1} we have, following Berndtsson-P{\u{a}}un \cite{BP}, introduced and studied the notion of singular hermitian metrics on holomorphic vector bundles (see also de Cataldo \cite{C}). The definition that we have adopted is the following.

\begin{df}\label{df:sing}
Let $E\to X$ be a holomorphic vector bundle over a complex manifold $X$. A singular hermitian metric $h$ on $E$ is a measurable map from the base space $X$ to the space of non-negative hermitian forms on the fibers.
\end{df}

On a line bundle $h$ is (locally) just a scalar-valued function so the curvature $\Theta=\dbar(h^{-1}\partial h)=-\partial\dbar\log h$ is well-defined as a current as long as $\log h\in L^1_{loc}(X)$. As $h$ is matrix-valued on a vector bundle, it is not clear what $\dbar(h^{-1}\partial h)$ means if $h$ is singular. However for smooth metrics there exists equivalent characterisations of curvature in the sense of Griffiths and Nakano, and these are the ones that we utilize in the singular setting.
 
\begin{df}\label{df:2.2}
Let $E\to X$ be a holomorphic vector bundle over a complex manifold $X$ and let $h$ be a singular hermitian metric. We say that $h$ is negatively curved in the sense of Griffiths if $\|u\|^2_h$ is plurisubharmonic for any holomorphic section $u$. Furthermore we say that $h$ is positively curved in the sense of Griffiths if the dual metric is negatively curved.
\end{df}

This is a very natural definition as these conditions both are well-known equivalent properties for smooth metrics; see section 2 where these facts are reviewed. In \cite{R1} our goal is to try to make sense of the curvature tensor associated to a singular hermitian metric $h$, that is positively or negatively curved in the sense of Griffiths as in Definition \ref{df:2.2}. The main result is that if we assume that $F:=\{\det h=0\}$ is a closed set, and furthermore assume that there exists an exhaustion of open sets $\{\Omega_j\}$ of $F^c$ such that $\det h>1/j$ on $\Omega_j$, then $\Theta^h:=\dbar(h^{-1}\partial h)$ can be defined as a current on $F^c$ (\cite{R1} Theorem 1.1). We also give an example which shows that it is not possible to define $\Theta^h$ in this way on the singular locus of $h$.

Once the existence of $\Theta^h$ as a current is in place, we can proceed to define what it means for $\Theta^h$ to be strictly positively or negatively curved in the sense of Griffiths. The definition that we will adopt is the following.

\begin{df}\label{df:2.3}
Let $h$ be a singular hermitian metric on a holomorphic vector bundle $E$. Assume that $F:=\{\det h=0\}$ is a closed set, and that there exists an exhaustion of open sets $\{\Omega_j\}$ of $F^c$ such that $\det h>1/j$ on $\Omega_j$. We say that $h$ is strictly negatively curved in the sense of Griffiths if:\\
(i) $h$ is negatively curved in the sense of Definition \ref{df:2.2}. In particular, $\Theta^h$ exists as a current on $F^c$.\\
(ii) there exists some $\delta>0$ such that on $F^c$
\be\label{eq:GrStr}
\sum_{j,k=1}^n\big(\Theta_{jk}^hs,s\big)_h\xi_j\bar{\xi}_k\leq-\delta\|s\|^2_h|\xi|^2
\ee
for any section $s$ and any vector $\xi\in\C^n$.

We say that $h$ is strictly positively curved in the sense of Griffiths, if the corresponding dual metric is strictly negatively curved.
\end{df}

Here $\{\Theta_{jk}\}$ are the matrix-valued distributions defined through
$$\Theta^h=\sum_{j,k=1}^n\Theta_{jk}^hdz_j\wedge d\bar{z}_k$$
and so the expression in (\ref{eq:GrStr}) should be interpreted in the sense of distributions.

The usefulness of Definition \ref{df:2.3} comes from the fact that it lends itself well to regularisations, which is the key ingredient needed in order to adapt the line bundle proofs of the Demailly-Nadel vanishing theorem to the vector bundle setting.

However, in the latter setting, strict positivity in the sense of Nakano is needed. We could proceed to define what this means on $F^c$ just as in the Griffiths case, but for the definition to be useful, we need some way of approximating the singular metric with a sequence of strictly Nakano positive metrics.

The key points in the regularisation arguments for singular hermitian metrics that are strictly positively or negatively curved in the sense of Griffiths, are the alternative characterisation of Griffiths negativity in terms of the plurisubharmonicity of $\|u\|^2_h$ for any holomorphic section $u$, and the fact that the dual of a Griffiths negative bundle is Griffiths positive. An alternative characterisation of Nakano negativity also exists, and in \cite{R1} we in fact prove a regularisation result in the Nakano negative setting (\cite{R1} Proposition 1.3). However, unlike curvature in the sense of Griffiths, the dual of a Nakano negative bundle, in general, is not Nakano positive. Due to this rather surprising fact we can not use this regularisation result in the positive setting. Hence for singular metrics a whole new approach to Nakano positivity is probably needed. Unfortunately we have so far failed to come up with an appropriate definition where regularisation is possible.

However for vector bundles over Riemann surfaces, i.e. when dim$_\C X=1$, the notions of Griffiths and Nakano curvature coincide. Hence we can prove the following Demailly-Nadel type of vanishing theorem in section 4.

\begin{thm}\label{thm:1.3}
Let $X$ be a Riemann surface and let $E$ be a holomorphic vector bundle over $X$. Let furthermore $h$ be a continuous singular hermitian metric on $X$. Suppose that $f$ is an $E$-valued, $(1,1)$-form. If $h$ is positively curved as in Definition \ref{df:2.3}, normalized so that $\delta=1$, there exists an $E$-valued $(1,0)$-form $u$ such that
$$\dbar u=f$$
in the sense of distributions, and
$$\int_X\|u\|_{h,\omega}^2dV_\omega\leq\int_X\|f\|_{h,\omega}^2dV_\omega,$$
provided that the right hand side is finite.
\end{thm}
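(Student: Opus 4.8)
The plan is to deduce Theorem \ref{thm:1.3} from Theorem \ref{thm:1.1} by a regularisation argument, exploiting the fact that on a Riemann surface Griffiths and Nakano curvature coincide, so that a singular metric which is strictly positively curved in the sense of Griffiths (Definition \ref{df:2.3}) is, away from the singular locus $F=\{\det h=0\}$, strictly positively curved in the sense of Nakano with the same constant $\delta=1$. First I would reduce to the case $X$ compact (or at least carrying a complete K\"ahler metric), noting that Theorem \ref{thm:1.1} extends to that setting as recalled in the introduction; on a noncompact Riemann surface one approximates by an exhaustion and passes to the limit in the $L^2$-estimate, using weak compactness of the solutions in $L^2$ exactly as in the classical Demailly--Nadel proof. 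So the heart of the matter is: given a $(1,1)$-form $f$ with $\int_X\|f\|^2_{h,\omega}\,dV_\omega<\infty$, produce $u$ with $\dbar u=f$ and $\int_X\|u\|^2_{h,\omega}\,dV_\omega\le\int_X\|f\|^2_{h,\omega}\,dV_\omega$.

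The key step is the regularisation. Using the hypothesis from Definition \ref{df:2.3} (that $F$ is closed and there is an exhaustion $\{\Omega_j\}$ of $F^c$ with $\det h>1/j$ on $\Omega_j$), together with the regularisation machinery developed for Griffiths-positive singular metrics in \cite{R1}, I would construct a decreasing sequence of \emph{smooth} hermitian metrics $h_\nu$ on $E|_{F^c}$ (or on suitable relatively compact pieces) with $h_\nu\downarrow h$ pointwise and with $i\Theta^{h_\nu}\ge_{Nak.}(1-\varepsilon_\nu)\,\omega\otimes I$, $\varepsilon_\nu\to0$; here one crucially uses that in complex dimension one Griffiths positivity of $\Theta^{h_\nu}$ is automatically Nakano positivity, which is precisely the obstruction the introduction flags as unavailable in higher dimensions. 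Since $h$ is assumed continuous, $F$ has measure zero (it is the zero set of the continuous function $\det h$, and the hypothesis forces $F^c$ to be dense), so all integrals over $X$ may be computed over $F^c$, and $\|f\|^2_{h_\nu,\omega}\nearrow\|f\|^2_{h,\omega}$ by monotone convergence while the norms stay finite.

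Next I would apply Theorem \ref{thm:1.1} on each relatively compact piece of $F^c$ (which one can arrange to carry a complete K\"ahler metric, e.g. by the standard trick of adding a multiple of $i\partial\dbar$ of minus the logarithm of the distance to the complement, since $F^c$ is a noncompact Riemann surface) with the metric $h_\nu$ and $p=1$, obtaining $u_{\nu}$ with $\dbar u_\nu=f$ and $\int\|u_\nu\|^2_{h_\nu,\omega}\,dV_\omega\le\frac{1}{1-\varepsilon_\nu}\int\|f\|^2_{h_\nu,\omega}\,dV_\omega\le\frac{1}{1-\varepsilon_\nu}\int_X\|f\|^2_{h,\omega}\,dV_\omega$. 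Because $h_\nu\ge h$, the same $u_\nu$ satisfies $\int\|u_\nu\|^2_{h,\omega}\,dV_\omega\le\frac{1}{1-\varepsilon_\nu}\int_X\|f\|^2_{h,\omega}\,dV_\omega$, so the $u_\nu$ are bounded in $L^2_{h,\omega}$; extracting a weakly convergent subsequence $u_\nu\rightharpoonup u$ gives $\dbar u=f$ in the sense of distributions (the $\dbar$-equation is preserved under weak limits) and, by lower semicontinuity of the norm under weak convergence, the desired estimate $\int_X\|u\|^2_{h,\omega}\,dV_\omega\le\liminf\frac{1}{1-\varepsilon_\nu}\int_X\|f\|^2_{h,\omega}\,dV_\omega=\int_X\|f\|^2_{h,\omega}\,dV_\omega$. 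Finally one checks that $u$, a priori defined on $F^c$, extends across $F$: since $u\in L^2_{loc}$ across $F$ (a measure-zero set which on a Riemann surface is of capacity zero once $h$ is, say, bounded below locally — or one argues via the removable singularity theorem for $\dbar$ with $L^2$ coefficients) and $\dbar u=f$ holds in the distributional sense on all of $X$.

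The main obstacle I expect is the regularisation step together with the boundary/exhaustion bookkeeping: one must produce smooth Nakano-positive approximants with curvature constant converging to $1$ \emph{and} with controlled convergence $h_\nu\downarrow h$ so that the $L^2$-norms behave monotonically, and one must carefully handle the fact that the estimate is obtained on $F^c$ and then transferred to $X$. The continuity hypothesis on $h$ and the structural hypothesis on $F$ from Definition \ref{df:2.3} are exactly what make this feasible in dimension one, and the coincidence of Griffiths and Nakano positivity for Riemann surfaces is what lets Theorem \ref{thm:1.1} be invoked with the sharp constant.
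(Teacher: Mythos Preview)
Your overall strategy---regularise the singular metric, apply Theorem~\ref{thm:1.1} to the smooth approximants, and pass to a weak limit---is the paper's strategy as well, but two points in your execution do not go through as written.

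First, the monotonicity of the approximating sequence runs the other way. The regularisation of a Griffiths-positive singular metric (as in \cite{R1}, Proposition~5.1) produces smooth metrics $h_\nu$ that \emph{increase} pointwise to $h$, i.e.\ $h_\nu\le h$. With $h_\nu\le h$ one does get $\|f\|^2_{h_\nu}\le\|f\|^2_h$, which controls the right-hand side, but one does \emph{not} get $\|u_\nu\|^2_{h}\le\|u_\nu\|^2_{h_\nu}$, so your chain of inequalities breaks. The paper closes this gap by a two-step limit: for $\nu>\nu_0$ one has $h_\nu\ge h_{\nu_0}$, hence a uniform bound in the fixed space $L^2_{h_{\nu_0}}$; extract a weak limit there, and only afterwards let $\nu_0\to\infty$ using monotone convergence to recover the $h$-estimate.

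Second, the regularisation itself is not performed on $F^c$ but globally on relatively compact Stein subdomains $D\subset X$, and this requires a device you have not mentioned: since $X$ sits as a one-dimensional submanifold of $\C^N$, one cannot simply convolve on $X$. The paper embeds $D$ in a Stein neighbourhood $U\subset\C^N$ admitting a holomorphic retraction $p:U\to D'$ (Siu), pulls $h$ back to $p^*h$ on $U$, convolves in $\C^N$, and checks that the curvature lower bound survives restriction to $X$ with an $\varepsilon$-loss. Because the resulting $h_\nu$ are smooth on all of $D$, there is no need to work on $F^c$ and then extend across $F$; your justification that continuity of $\det h$ forces $F$ to be null is also not valid as stated. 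Finally, the paper's reduction order is the reverse of yours: one first treats non-compact $X$ (where every holomorphic vector bundle is trivial, so $E$ may be assumed trivial), exhausts by such $D$, and then deduces the compact case by deleting a single point and applying a removable-singularity lemma for $\dbar$ with $L^2$ coefficients.
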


\section*{Acknowledgments}
\noindent It is a pleasure to thank my advisor Bo Berndtsson for inspiring and fruitful discussions.

\section{The setting} 
\noindent Let $(X,\omega)$ be a compact K\"ahler manifold and let $(E,h)$ be a hermitian, holomorphic vector bundle over $X$. Then we have a well-defined bilinear form which we denote by $\langle,\rangle$, for forms on $X$ with values in $E$ by letting $\langle\alpha\otimes s,\beta\otimes t\rangle:=\alpha\wedge\bar{\beta}\ (s,t)_h$ for forms $\alpha,\beta$ and sections $s,t$, and then extend to arbitrary forms with values in $E$ by linearity. We denote the Chern connection associated to this bilinear form by $D=D'+\dbar$ and the curvature by $\Theta=D^2=D'\dbar+\dbar D'$.

Now let $\alpha$ be an $E$-valued form of bidegree $(p,0)$. We define the norm of $\alpha$ with respect to the metrics $h$ and $\omega$ through
\be\label{eq:norm}
\|\alpha\|^2dV_\omega=c_p\langle\alpha,\alpha\rangle\wedge\omega_{n-p}
\ee
where $dV_\omega:=\omega^n/n!$, $\omega_{n-p}:=\omega^{n-p}/(n-p)!$ and $c_p:=i^{p^2}$ an unimodular constant chosen so that the right hand side is positive. One can show that if $\{dz_j\}$ are orthonormal coordinates at a point and
$$\alpha=\sum_{|I|=p}\alpha_Idz_I$$
where $\{\alpha_I\}$ are sections of $E$, then
$$\|\alpha\|^2=\sum_{|I|=p}\|\alpha_I\|^2_h.$$
We also use (\ref{eq:norm}) to define the norm of $E$-valued forms of bidegree $(0,q)$. In particular then $\|\alpha\|=\|\bar{\alpha}\|$.

Using this definition one can now proceed to show that it is possible to define the norm of an $E$-valued form $\eta$ of arbitrary bidegree in such a way that if
$$\eta=\sum\eta_{IJ}dz_I\wedge d\bar{z}_J$$
in terms of an orthonormal basis at a point, then
$$\|\eta\|^2=\sum\|\eta_{IJ}\|^2_h.$$
This norm can then be polarized yielding an inner product for $E$-valued forms of arbitrary bidegree. Hence if $\xi$ is another form with values in $E$, which is of the same bidegree as $\eta$, then
$$(\eta,\xi)=\sum(\eta_{IJ},\xi_{IJ})_h$$
if we express $\eta$ and $\xi$ in terms of an orthonormal basis as above.

With respect to this inner product we can then define the formal adjoint of the $\dbar$ operator with respect to the metric $h$ through
\be\label{eq:adjoint}
\int_X(\dbar\eta,\xi)dV_\omega=\int_X(\eta,\dbar^*_h\xi)dV_\omega
\ee
for all $E$-valued forms $\eta$ and $\xi$ of appropriate bidegrees.

In presenting the inner product on $E$-valued forms of arbitrary bidegree we have been rather sketchy since we will only be interested in forms of bidegree $(n,p)$ and for these there exists a useful special formula. Namely given any $(n,p)$-form $\alpha$ it follows from a computation in orthonormal coordinates that there exists an $(n-p,0)$-form $\gamma_\alpha$ such that
$$\alpha=\gamma_\alpha\wedge\omega_p.$$
If $\alpha$ is given in orthonormal coordinates by
$$\alpha=\sum_{|J|=p}\alpha_Jdz\wedge d\bar{z}_J$$
then $\gamma_\alpha$ will be given by
$$\gamma_\alpha=\sum_{|J|=p}\varepsilon_J\alpha_Jdz_{J^c}$$
where $\varepsilon_J$ are unimodular constants. In terms of Riemannian geometry one usually calls $\gamma_\alpha$ the Hodge-$\ast$ of $\alpha$.

It is immediate that the existence of $\gamma_\alpha$ is not affected by requiring $\alpha$ to be $E$-valued and furthermore it is clear that in this case
$$\|\alpha\|^2=\|\gamma_\alpha\|^2.$$
Together with (\ref{eq:norm}) this in turn implies that
$$c_{n-p}\langle\alpha,\gamma_\alpha\rangle=c_{n-p}\langle\gamma_\alpha,\gamma_\alpha\rangle\wedge\omega_p=\|\gamma_\alpha\|^2dV_\omega=\|\alpha\|^2dV_\omega$$
and polarizing this we arrive at
\be\label{eq:product}
(\alpha,\beta)dV_\omega=c_{n-p}\langle\alpha,\gamma_\beta\rangle
\ee
for any other $E$-valued, $(n,p)$-form $\beta$.

Using this last formula we can deduce a very useful relation between the formal adjoint of $\dbar$ and the $(1,0)$-part of the Chern connection. If we let $\alpha$ be an $E$-valued $(n,p-1)$-form but keep $\beta$ as before we have that on the one hand
$$\int_X(\dbar\alpha,\beta)dV_\omega = c_{n-p}\int_X\langle\dbar\alpha,\gamma_\beta\rangle=c_{n-p}(-1)^{n-p}\int_X\langle\alpha,D'\gamma_\beta\rangle$$
by Stokes' theorem and on the other hand
$$\int_X(\alpha,\dbar_h^*\beta)dV_\omega=c_{n-p+1}\int_X\langle\alpha,\gamma_{\dbar_h^*\beta}\rangle=ic_{n-p}(-1)^{n-p}\int_X\langle\alpha,\gamma_{\dbar_h^*\beta}\rangle.$$
Hence we see that
$$D'\gamma_\beta=-i\gamma_{\dbar_h^*\beta}$$
so that in particular
\be\label{eq:adj}
\|D'\gamma_\beta\|^2=\|\gamma_{\dbar_h^*\beta}\|^2=\|\dbar_h^*\beta\|^2.
\ee

Given a hermitian metric $h$ on a holomorphic vector bundle $E$, the curvature $\Theta$ associated to $h$ is locally a matrix of $(1,1)-$forms which we write as
\be\label{eq:curv2}
\Theta=\sum_{j,k=1}^n\Theta_{jk}dz_j\wedge d\bar{z}_k
\ee
where $\Theta_{jk}$ are $r\times r$ matrix-values functions on $X$, $r$ being the rank of $E$. We then say that $(E,h)$ is strictly positively curved in the sense of Griffiths if for some $\delta>0$
$$\big(i\Theta(\xi,\xi)s,s\big)_h\geq\delta\|s\|^2_h|\xi|^2$$
for any section $s$ of $E$, and any smooth vector field $\xi$. Using (\ref{eq:curv2}) we see that this is equivalent to
$$\sum_{j,k=1}^n\big(\Theta_{jk}s,s\big)_h\xi_j\bar\xi_k\geq\delta\|s\|^2_h|\xi|^2$$
for any vector $\xi\in\C^n$. We will use the shorthand notation $i\Theta\geq_{Gr.}\delta\omega\otimes I$ to express this.

We say that $E$ is strictly positively curved in the sense of Nakano if for some $\delta>0$
\be\label{eq:nak2}
\sum_{j,k=1}^n\big(\Theta_{jk}s_j,s_k\big)_h\geq\delta\sum_{j=1}^n\|s_j\|^2_h
\ee
for any $n$-tuple $(s_1,\ldots,s_n)$ of sections of $E$. Griffiths and Nakano semipositivity, seminegativity and strict negativity are defined similarly, and in this setting we will use the shorthand notation $i\Theta\geq_{Nak.}\delta\omega\otimes I$.

Choosing $s_j=\xi_js$ in (\ref{eq:nak2}) it is immediate that being positively or negatively curved in the sense of Nakano implies being positively or negatively curved in the sense of Griffiths. The converse however, does not hold in general. Of these two main positivity concepts Griffiths positivity has the nicest functorial properties in that if $E$ is positively curved in the sense of Griffiths, then the dual bundle $E^*$ has negative Griffiths curvature. This property will be very useful for us as it allows us to study metrics with positive and negative Griffiths curvature interchangeably. Unfortunately this correspondence between $E$ and $E^*$ does not hold in the Nakano case. The reason for studying Nakano positivity is that it is intimately related with the solvability of the inhomogeneous $\dbar$-equation using $L^2$ methods.

Let $u$ be an arbitrary holomorphic section of $E$. Then a short computation yields
$$i\partial\dbar\|u\|_{h}^2=-\langle i\Theta u,u\rangle_{h}+i\langle D^\prime u,D^\prime u\rangle_{h}\geq-\langle i\Theta u,u\rangle_{h}$$
Hence we see that if the curvature is negative in the sense of Griffiths, then $\|u\|^2_{h}$ is plurisubharmonic. On the other hand we can always find a holomorphic section $u$ such that $D^\prime u=0$ at a point. Thus $h$ is negatively curved in the sense of Griffiths if and only if $\|u\|^2_{h}$ is plurisubharmonic, for any holomorphic section $u$. This is the motivation behind Definition \ref{df:2.2}.

Now if $\gamma$ is an $E$-valued $(n-1,0)$-form we can locally write it as
$$\gamma=\sum_{j=1}^n\gamma^j\widehat{dz_j}$$
where $\widehat{dz_j}$ denotes the wedge product of all $dz_k$ except $dz_j$ ordered so that $dz_j\wedge\widehat{dz_j}=dz_1\wedge\ldots\wedge dz_n$. One can then verify that
\be\label{eq:Nakan}
ic_{n-1}\langle\Theta\wedge\gamma,\gamma\rangle=\sum_{j,k=1}^n\big(\Theta_{jk}\gamma^j,\gamma^k\big)dV_\omega.
\ee
Hence if $\Theta$ is strictly positively curved in the sense of Nakano then
$$ic_{n-1}\langle\Theta\wedge\gamma,\gamma\rangle\geq\delta\|\gamma\|^2dV_\omega,$$
for all $E$-valued $(n-1,0)$-forms $\gamma$.

More generally one can show that if $\Theta$ is strictly positively curved in the sense of Nakano, then for any $E$-valued $(n-p,0)$-form $\gamma$
\be\label{eq:BasicIneq}
ic_{n-p}\langle\Theta\wedge\gamma,\gamma\rangle\wedge\omega_{p-1}\geq\delta p\|\gamma\|^2dV_\omega.
\ee
To see this we first note that if $i\Theta\geq_{Nak.}0$ implies that
\be\label{eq:BI_light}
ic_{n-p}\langle\Theta\wedge\gamma,\gamma\rangle\wedge\omega_{p-1}\geq0,
\ee
we are done, since $i\Theta\geq_{Nak.}\delta\omega\otimes I$ in that case yields
{\setlength\arraycolsep{2pt}
\begin{eqnarray}
&&ic_{n-p}\langle\Theta\wedge\gamma,\gamma\rangle\wedge\omega_{p-1}\geq\delta c_{n-p}\langle\omega\wedge\gamma,\gamma\rangle\wedge\omega_{p-1}=\nonumber\\
&&\quad=\delta c_{n-p}\langle\gamma,\gamma\rangle\wedge\frac{\omega^p}{(p-1)!}=\delta pc_{n-p}\langle\gamma,\gamma\rangle\wedge\omega_p=\delta p\|\gamma\|^2dV_\omega.\nonumber
\end{eqnarray}}
\!\!Thus (\ref{eq:BasicIneq}) is equivalent to proving that $i\Theta\geq_{Nak.}0$ implies (\ref{eq:BI_light}).

In terms of an orthonormal basis $\{dz_j\}$, we have that
$$\omega_{p-1}=\sum_{|J|=p-1}dV_J$$
where
$$dV_J=\bigwedge_{j\in J}idz_j\wedge d\bar{z}_j=c_{p-1}dz_J\wedge d\bar{z}_J.$$
The last equality follows from the fact that $dV_J$ is a positive $(p-1,p-1)$-form, and hence the constant $c_{p-1}$ is needed to turn the right hand side into a positive form as well. Inserting this gives us the sum
$$ic_{n-p}\langle\Theta\wedge\gamma,\gamma\rangle\wedge\omega_{p-1}=ic_{n-p}c_{p-1}(-1)^{(n-p)(p-1)}\!\!\!\!\sum_{|J|=p-1}\!\!\!\langle\Theta\wedge\gamma\wedge dz_J,\gamma\wedge dz_J\rangle.$$
One can now verify that $c_{n-p}c_{p-1}(-1)^{(n-p)(p-1)}=c_{n-1}$ and so we get that each term in the sum is of the form
$$ic_{n-1}\langle\Theta\wedge\tilde{\gamma},\tilde{\gamma}\rangle$$
where $\tilde{\gamma}$ is an $E$-valued, $(n-1,0)$-form. However, from the calculations above we see that if $i\Theta\geq_{Nak.}0$, each such term is non-negative. This shows that (\ref{eq:BI_light}), and hence (\ref{eq:BasicIneq}), holds.

\section{The compact K\"ahler case}
\noindent In this section we will prove part (i) of Theorem \ref{thm:1.1}. The general functional analytic result that we will utilize is the following.
\begin{thm}\label{thm:3.1}
Let $T$ be a linear operator between two Hilbert spaces $H_1$ and $H_2$, and let $F$ be a closed subspace of $H_2$ containing the range of $T$. Assume furthermore that $T$ is closed and densely defined. Then $T$ is surjective onto $F$ if and only if there is a constant $c>0$ such that
\be\label{eq:Hilbineq}
c\|y\|_{H_2}^2\leq\|T^*y\|^2_{H_1}
\ee
for all $y\in Dom(T^*)\cap F$.

In that case for any $y\in F$ there is an $x\in H_1$ such that $Tx=y$ and
$$\|x\|_{H_1}^2\leq\frac{1}{c}\|y\|_{H_2}^2.$$
\end{thm}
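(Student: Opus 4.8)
The plan is to run the standard Hahn--Banach and bounded-inverse argument underlying H\"ormander's $L^2$ method; the only extra point will be the bookkeeping forced by the closed subspace $F$. For the ``only if'' direction I would argue as follows: since $T$ is closed, $\Dom(T)$ equipped with the graph norm $x\mapsto(\|x\|_{H_1}^2+\|Tx\|_{H_2}^2)^{1/2}$ is a Hilbert space on which $T$ acts as a bounded surjection onto the Banach space $F$, so the open mapping theorem furnishes a constant $C>0$ such that every $y\in F$ has a preimage $x\in\Dom(T)$ with $\|x\|_{H_1}\leq C\|y\|_{H_2}$. Then for $y\in\Dom(T^*)\cap F$, choosing such an $x$ gives $\|y\|_{H_2}^2=(Tx,y)=(x,T^*y)\leq\|x\|_{H_1}\|T^*y\|_{H_1}\leq C\|y\|_{H_2}\|T^*y\|_{H_1}$, which is (\ref{eq:Hilbineq}) with $c=C^{-2}$.

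For the ``if'' direction, fix $y\in F$; the goal is an $x$ with $Tx=y$ and $\|x\|_{H_1}^2\leq c^{-1}\|y\|_{H_2}^2$. First I would record that, because $\Im T\subseteq F$ and $T$ is closed, $F^\perp\subseteq(\Im T)^\perp=\Ker T^*$; in particular every $v_2\in F^\perp$ lies in $\Dom(T^*)$ with $T^*v_2=0$, so that if $v\in\Dom(T^*)$ is split orthogonally as $v=v_1+v_2$ with $v_1\in F$ and $v_2\in F^\perp$, then $v_1\in\Dom(T^*)$, $T^*v_1=T^*v$ and $(v_1,y)=(v,y)$. Next I would define a linear functional $L$ on the subspace $T^*\big(\Dom(T^*)\cap F\big)\subseteq H_1$ by $L(T^*w):=(w,y)$ for $w\in\Dom(T^*)\cap F$. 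The a priori estimate (\ref{eq:Hilbineq}) makes $L$ well defined --- if $T^*w=0$ with $w\in F$ then $c\|w\|_{H_1}^2\leq\|T^*w\|_{H_1}^2=0$ --- and bounded, since $|L(T^*w)|=|(w,y)|\leq\|w\|_{H_1}\|y\|_{H_2}\leq c^{-1/2}\|T^*w\|_{H_1}\|y\|_{H_2}$. Extending $L$ to the closure of its domain and applying the Riesz representation theorem produces $x\in H_1$ with $\|x\|_{H_1}\leq c^{-1/2}\|y\|_{H_2}$ and $(T^*w,x)=(w,y)$ for all $w\in\Dom(T^*)\cap F$; by the splitting above this upgrades to $(T^*v,x)=(v,y)$, i.e. $(x,T^*v)=(y,v)$, for every $v\in\Dom(T^*)$. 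By the definition of the adjoint this says exactly that $x\in\Dom(T^{**})$ with $T^{**}x=y$, and $T^{**}=T$ since $T$ is closed (and densely defined, so $\Dom(T^*)$ is dense); hence $Tx=y$ with the required bound.

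The only thing one must be careful about --- and the one obstacle that makes this slightly more than a textbook statement --- is the role of $F$: a priori $\Ker T^*$ may be strictly larger than $F^\perp$, so without first projecting onto $F$ the functional $L$ would be ill defined on $T^*(\Dom T^*)$, and (\ref{eq:Hilbineq}) would be demanded of vectors for which it may fail. Isolating the inclusion $F^\perp\subseteq\Ker T^*$ and working systematically with $w\in\Dom(T^*)\cap F$ throughout is precisely what reconciles the two directions; everything else is routine, with closedness of $T$ entering only through the identity $T^{**}=T$.
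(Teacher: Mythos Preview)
Your argument is correct and is precisely the standard H\"ormander/Riesz representation proof; the paper itself does not give a proof of this theorem but simply refers the reader to Chapter~2 of \cite{B}, where essentially the same argument appears. One cosmetic point: in a couple of places you write $\|w\|_{H_1}$ when $w\in\Dom(T^*)\cap F\subseteq H_2$ (in the well-definedness and boundedness of $L$); these should read $\|w\|_{H_2}$, but the logic is unaffected.
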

\noindent For a proof we refer to chapter 2 in \cite{B}.

In our setting of course $T=\dbar$ and $F=Ker(\dbar)$. Since we are only interested in the case when $E$ carries a metric which is positively curved in the sense of Nakano our Hilbert spaces will be $H_1=L^2_{(n,p-1)}(X,E,h)$ and $H_2=L^2_{(n,p)}(X,E,h)$ for $p\geq1$.

What we want to show to begin with is that for any smooth, $E$-valued, $(n,p)$-form $\alpha$ with $p\geq1$, there exists a constant $c>0$ such that
$$c\int_X\|\alpha\|^2dV_\omega\leq\int_X(\|\dbar_h^*\alpha\|^2+\|\dbar\alpha\|^2)dV_\omega.$$
Traditionally this is achieved through the K\"ahler identities but as mentioned in the introduction we are going to use Siu's $\partial\dbar$-Bochner-Kodaira method. Hence given $\alpha$ we will compute and analyze
$$i\partial\dbar c_{n-p}\langle\gamma,\gamma\rangle\wedge\omega_{p-1}$$
where $\gamma$ is the Hodge-$\ast$ of $\alpha$ (as there is no risk for confusion, in what follows we will write $\gamma$ instead of $\gamma_\alpha$). This is the content of the following proposition.

\begin{prop}
Let $(X,\omega)$ be a K\"ahler manifold and let $(E,h)$ be a hermitian, holomorphic vector bundle over $X$. Then for any smooth, $E$-valued $(n,p)$-form $\alpha$ on $X$ with $\alpha=\gamma\wedge\omega_p$
{\setlength\arraycolsep{2pt}
\begin{eqnarray}\label{eq:basic}
ic_{n-p}\partial\dbar\langle\gamma,\gamma\rangle\!\wedge\omega_{p-1}\!&=&\! ic_{n-p}\Big(\!\langle\Theta\wedge\gamma,\gamma\rangle\!-\!\langle\dbar D'\gamma,\gamma\rangle\!+\!\langle\gamma,\dbar D'\gamma\rangle\!\Big)\!\wedge\omega_{p-1}  \nonumber\\
& & {}+\Big(\|\dbar_h^*\alpha\|^2+\|\dbar\gamma\|^2-\|\dbar\alpha\|^2\Big)dV_\omega.
\end{eqnarray}}

In particular, if $\alpha$ has compact support
\be\label{eq:IntBasic}
ic_{n-p}\int_X\langle\Theta\gamma,\gamma\rangle\wedge\omega_{p-1}+\int_X\|\dbar\gamma\|^2dV_\omega=\int_X(\|\dbar_h^*\alpha\|^2+\|\dbar\alpha\|^2)dV_\omega.
\ee
\end{prop}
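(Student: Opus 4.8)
The plan is to compute $i\partial\dbar\langle\gamma,\gamma\rangle\wedge\omega_{p-1}$ by a direct application of the Leibniz rule, exploiting that $\omega$ is closed (the only place the K\"ahler hypothesis enters) so that $\omega_{p-1}$ passes through $\partial$ and $\dbar$ without producing extra terms. Since the Chern connection decomposes as $D=D'+\dbar$ with $\Theta=D'\dbar+\dbar D'$ as the $(1,1)$-part of $D^2$, I would first write $\partial\langle\gamma,\gamma\rangle=\langle D'\gamma,\gamma\rangle+(-1)^{?}\langle\gamma,\dbar\gamma\rangle$ and $\dbar\langle\gamma,\gamma\rangle=\langle\dbar\gamma,\gamma\rangle+(-1)^{?}\langle\gamma,D'\gamma\rangle$, being careful with the sign conventions coming from the bilinear form $\langle\alpha\otimes s,\beta\otimes t\rangle=\alpha\w\bar\beta\,(s,t)_h$ and the bidegrees of $\gamma$ (an $(n-p,0)$-form). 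Applying $\partial$ to $\dbar\langle\gamma,\gamma\rangle$ and using $\partial\dbar=-\dbar\partial$ together with $\partial\langle\cdot,\cdot\rangle=\langle D'\cdot,\cdot\rangle+\langle\cdot,\dbar\cdot\rangle$ and $\dbar\langle\cdot,\cdot\rangle=\langle\dbar\cdot,\cdot\rangle+\langle\cdot,D'\cdot\rangle$, the mixed second derivatives collapse: the terms $\langle D'D'\gamma,\gamma\rangle$ vanish because $D'D'$ has no $(1,1)$ contribution relevant after wedging with $\omega_{p-1}$ on an $(n,\cdot)$-form (more precisely $D'\gamma$ is an $(n-p+1,0)$-form and $D'D'\gamma$ an $(n-p+2,0)$-form which already dies for degree reasons once we are at $(n,*)$), and what survives are exactly the curvature term $\langle\Theta\w\gamma,\gamma\rangle$, the cross terms $-\langle\dbar D'\gamma,\gamma\rangle+\langle\gamma,\dbar D'\gamma\rangle$, and the norm-squared contributions $\langle\dbar\gamma,\dbar\gamma\rangle$ and $\langle D'\gamma,D'\gamma\rangle$ type terms.

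Next I would convert the form-level identity into the stated mixture of pointwise norms by invoking the machinery already set up in Section 2. The term $ic_{n-p}\langle D'\gamma,D'\gamma\rangle\w\omega_{p-1}$, after matching degrees, should by the norm formula equal $\|D'\gamma\|^2dV_\omega$, and then by \eqref{eq:adj} this is $\|\dbar_h^*\alpha\|^2dV_\omega$. Similarly the $\dbar\gamma$ term gives $\|\dbar\gamma\|^2dV_\omega$. The term $\|\dbar\alpha\|^2$ enters with a minus sign; to see this I would use $\alpha=\gamma\w\omega_p$ so that $\dbar\alpha=\dbar\gamma\w\omega_p$ (again using $\dbar\omega=0$), compute $\|\dbar\alpha\|^2$ in terms of $\dbar\gamma$, and reconcile it with the $\langle\dbar\gamma,\dbar\gamma\rangle$-type term that appeared — the discrepancy between "$\dbar\gamma$ wedged with $\omega_{p-1}$" and "$\dbar\gamma$ wedged with $\omega_p$" is precisely what produces a separate $-\|\dbar\alpha\|^2$. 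This bookkeeping, keeping straight which wedge power of $\omega$ accompanies each term and which unimodular constant $c_k$ is attached, is the part requiring the most care.

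For the second assertion \eqref{eq:IntBasic}, I would integrate \eqref{eq:basic} over $X$. When $\alpha$ has compact support (automatic if $X$ is compact), Stokes' theorem kills the left-hand side $\int_X i\partial\dbar\langle\gamma,\gamma\rangle\w\omega_{p-1}$ since it is $d$ of a form of full support-less degree and $d\omega_{p-1}=0$; more precisely $\int_X \partial\dbar(\cdots)\w\omega_{p-1}=\int_X d\big(\dbar(\cdots)\w\omega_{p-1}\big)=0$. The two cross terms $-\langle\dbar D'\gamma,\gamma\rangle+\langle\gamma,\dbar D'\gamma\rangle$ are, up to sign and conjugation, $d$ of $\langle D'\gamma,\gamma\rangle$-type expressions wedged with the closed form $\omega_{p-1}$, so their integrals also vanish by Stokes (one should check they combine into an exact form rather than merely each being a "half" — writing $\langle\gamma,\dbar D'\gamma\rangle$ as the conjugate-type partner of $-\langle\dbar D'\gamma,\gamma\rangle$ and recognizing the sum as $\pm\dbar\langle D'\gamma,\gamma\rangle$ or $\pm\partial\langle\cdot,\cdot\rangle$ does it). What remains is exactly $ic_{n-p}\int_X\langle\Theta\w\gamma,\gamma\rangle\w\omega_{p-1}+\int_X\|\dbar\gamma\|^2dV_\omega=\int_X(\|\dbar_h^*\alpha\|^2+\|\dbar\alpha\|^2)dV_\omega$, which is \eqref{eq:IntBasic}. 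I expect the main obstacle to be neither conceptual nor analytic but purely combinatorial: pinning down all the signs and unimodular constants $c_p, c_{n-p}, c_{n-p+1}$ and the $(-1)^{n-p}$ factors so that the terms assemble into precisely the asserted form, particularly verifying that the curvature term emerges with the correct overall sign matching the convention in \eqref{eq:curv2}–\eqref{eq:nak2} and that the identification $\|D'\gamma\|^2=\|\dbar_h^*\alpha\|^2$ from \eqref{eq:adj} is applied with the right normalization.
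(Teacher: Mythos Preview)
Your plan for the pointwise identity \eqref{eq:basic} is essentially the paper's: expand $\partial\dbar\langle\gamma,\gamma\rangle$ by the Leibniz rule for the Chern connection, rewrite $\langle D'\dbar\gamma,\gamma\rangle$ using $\Theta=D'\dbar+\dbar D'$, and then identify $ic_{n-p}(-1)^{n-p}\langle D'\gamma,D'\gamma\rangle\wedge\omega_{p-1}=\|D'\gamma\|^2dV_\omega=\|\dbar_h^*\alpha\|^2dV_\omega$ via \eqref{eq:adj}. The delicate $\dbar\gamma$ term is indeed handled (for $p=1$) by the algebraic identity $ic_{n-1}(-1)^n\langle\xi,\xi\rangle=(\|\xi\|^2-\|\xi\wedge\omega\|^2)dV_\omega$ for $(n-1,1)$-forms $\xi$, applied to $\xi=\dbar\gamma$ together with $\dbar\alpha=\dbar\gamma\wedge\omega$; your description of this as the ``discrepancy between $\omega_{p-1}$ and $\omega_p$'' captures the idea.

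There is, however, a genuine error in your derivation of \eqref{eq:IntBasic}. You assert that the two cross terms $-\langle\dbar D'\gamma,\gamma\rangle+\langle\gamma,\dbar D'\gamma\rangle$ combine into an exact form and hence integrate to zero. They do not. From $\dbar\langle D'\gamma,\gamma\rangle=\langle\dbar D'\gamma,\gamma\rangle+(-1)^{n-p+1}\langle D'\gamma,D'\gamma\rangle$ and $\partial\langle\gamma,D'\gamma\rangle=\langle D'\gamma,D'\gamma\rangle+(-1)^{n-p}\langle\gamma,\dbar D'\gamma\rangle$ one finds
\[
-\langle\dbar D'\gamma,\gamma\rangle+\langle\gamma,\dbar D'\gamma\rangle
= -\dbar\langle D'\gamma,\gamma\rangle+(-1)^{n-p}\partial\langle\gamma,D'\gamma\rangle
+2(-1)^{n-p+1}\langle D'\gamma,D'\gamma\rangle,
\]
so after multiplying by $ic_{n-p}$, wedging with $\omega_{p-1}$, and integrating, the cross terms contribute $-2\int_X\|\dbar_h^*\alpha\|^2dV_\omega$, not zero. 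This $-2$ is exactly what turns the $+\|\dbar_h^*\alpha\|^2$ appearing in the pointwise identity into the $+\|\dbar_h^*\alpha\|^2$ on the \emph{right}-hand side of \eqref{eq:IntBasic}. With your claim the sign of $\|\dbar_h^*\alpha\|^2$ comes out wrong, yielding $\int\langle\Theta\gamma,\gamma\rangle+\int\|\dbar\gamma\|^2=\int\|\dbar\alpha\|^2-\int\|\dbar_h^*\alpha\|^2$, from which no a~priori estimate of the type in Corollary~\ref{cor:main} can be extracted. The fix is simply to integrate each cross term by parts separately, as the paper does, and keep the resulting $\langle D'\gamma,D'\gamma\rangle$ contributions.
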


\begin{proof}
A short computation using the compatibility of the Chern connection gives
{\setlength\arraycolsep{2pt}
\begin{eqnarray}
\partial\dbar\langle\gamma,\gamma\rangle&=&\langle D'\dbar\gamma,\gamma\rangle+\langle\gamma,\dbar D'\gamma\rangle+\nonumber\\
& & {}+(-1)^{n-p}\langle D'\gamma,D'\gamma\rangle+(-1)^{n-p+1}\langle\dbar\gamma,\dbar\gamma\rangle\nonumber
\end{eqnarray}}
Using that $\Theta=D'\dbar+\dbar D'$ in the first term then yields the first three terms in (\ref{eq:basic}). Now if we write $D'\gamma$ in orthonormal coordinates as
$$D'\gamma=\sum_{|J|=n-p+1}(D'\gamma)_Jdz_J$$
we see that
{\setlength\arraycolsep{2pt}
\begin{eqnarray}
ic_{n-p}(-1)^{n-p}\langle D'\gamma,D'\gamma\rangle\wedge\omega_{p-1} & \!=\! & c_{n-p+1}\!\!\!\!\!\sum_{|J|=n-p+1}\!\!\!\!\!\|(D'\gamma)_J\|^2_hdz_J\wedge d\bar{z}_J\wedge\omega_{q-1}=\nonumber\\
& = & \|D'\gamma\|^2dV_\omega=\|\dbar_h^*\alpha\|^2dV_\omega\nonumber
\end{eqnarray}}
where we have used (\ref{eq:adj}) and the fact that $ic_{n-p}(-1)^{n-p}=c_{n-p+1}$.

Only the last term remains but this one is much more tricky than the rest. We will only treat the most important case $p=1$. The general case is similar, but more tedious.

We will show that for any $E$-valued $(n-1,1)$-form $\xi$
\be\label{eq:equa}
ic_{n-1}(-1)^n\langle\xi,\xi\rangle=(\|\xi\|^2-\|\xi\wedge\omega\|^2)dV_\omega.
\ee
This will then imply that
$$ic_{n-1}(-1)^n\langle\dbar\gamma,\dbar\gamma\rangle=(\|\dbar\gamma\|^2-\|\dbar\gamma\wedge\omega\|^2)dV_\omega=(\|\dbar\gamma\|^2-\|\dbar\alpha\|^2)dV_\omega$$
where the last step uses that $\omega$ is a K\"ahler form. (For $p=1$ this is in fact the only place where the K\"ahler assumption is used.) 

Now in an orthonormal basis $\xi$ is of the form
$$\xi=\sum_{j,k}\xi_{jk}\widehat{dz_j}\wedge d\bar{z}_k$$
where $\widehat{dz_j}$ denotes the product of all differentials $dz_l$ except $dz_j$, ordered so that $dz_j\wedge\widehat{dz_j}=dz$. Thus
$$\langle\xi,\xi\rangle=-\sum_{j,k}(\xi_{jk},\xi_{kj})_hdz\wedge d\bar{z}$$
so that
$$ic_{n-1}(-1)^n\langle\xi,\xi\rangle=\sum_{j,k}(\xi_{jk},\xi_{kj})_hdV_\omega$$
where we have once again used that $ic_{n-1}(-1)^{n-1}=c_n$. On the other hand, in orthonormal coordinates $\omega$ is of the form
$$\omega=i\sum_{j}dz_j\wedge d\bar{z}_j$$
and so
$$\xi\wedge\omega=i(-1)^{n+1}\sum_{j<k}(\xi_{jk}-\xi_{kj})dz\wedge dz_j\wedge dz_k.$$
Also a short computation gives that
$$\|\xi\wedge\omega\|^2=\sum_{j<k}\|\xi_{jk}-\xi_{kj}\|^2_h=\sum_{j,k}\Big(\|\xi_{jk}\|^2-(\xi_{jk},\xi_{kj})\Big).$$
Altogether this proves (\ref{eq:equa}) thereby finishing the proof of (\ref{eq:basic}).

To get to (\ref{eq:IntBasic}) from (\ref{eq:basic}) we just need to compute the integral of the second and third terms. Now this is rather straighforward using the compatibility of the Chern connection, Stokes' theorem and the fact that $ic_{n-p}(-1)^{n-p}=c_{n-p+1}$, namely
{\setlength\arraycolsep{2pt}
\begin{eqnarray}
&&-ic_{n-p}\int_X\langle\dbar D'\gamma,\gamma\rangle\wedge\omega_{p-1} = -ic_{n-p}\int_X\dbar\langle D'\gamma,\gamma\rangle\wedge\omega_{p-1}-\nonumber\\
& & {}\qquad-ic_{n-p}(-1)^{n-p}\int_X\langle D'\gamma,D'\gamma\rangle\wedge\omega_{p-1}=\nonumber\\
& & {}\qquad=-\int_X\|D'\gamma\|^2dV_\omega=-\int_X\|\dbar_h^*\alpha\|^2dV_\omega\nonumber
\end{eqnarray}}
and in the same way
{\setlength\arraycolsep{2pt}
\begin{eqnarray}
&&ic_{n-p}\int_X\langle\gamma,\dbar D'\gamma\rangle\wedge\omega_{p-1} = ic_{n-p}(-1)^{n-p}\int_X\partial\langle \gamma,D'\gamma\rangle\wedge\omega_{p-1}-\nonumber\\
& & {}\qquad-ic_{n-p}(-1)^{n-p}\int_X\langle D'\gamma,D'\gamma\rangle\wedge\omega_{p-1}=-\int_X\|\dbar_h^*\alpha\|^2dV_\omega.\nonumber
\end{eqnarray}}
\end{proof}

\begin{cor}\label{cor:main}
Let $(X,\omega)$ be a compact K\"ahler manifold and let $(E,h)$ be a hermitian, holomorphic vector bundle over $X$, where $h$ is strictly positively curved in the sense of Nakano, with $i\Theta^h\geq_{Nak.}\delta\omega\otimes I$ for some $\delta>0$. Then for any smooth, $E$-valued, $(n,p)$-form $\alpha$ with $p\geq1$
$$p\delta\int_X\|\alpha\|^2dV_\omega\leq\int_X(\|\dbar^*_h\alpha\|^2+\|\dbar\alpha\|^2)dV_\omega.$$
\end{cor}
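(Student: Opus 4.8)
The plan is to combine the integrated identity \eqref{eq:IntBasic} of the preceding Proposition with the basic Nakano positivity estimate \eqref{eq:BasicIneq} from Section~2. Since $X$ is compact, every smooth $E$-valued $(n,p)$-form $\alpha$ has compact support (namely all of $X$), so \eqref{eq:IntBasic} applies directly without any cutoff argument:
$$ic_{n-p}\int_X\langle\Theta\gamma,\gamma\rangle\wedge\omega_{p-1}+\int_X\|\dbar\gamma\|^2dV_\omega=\int_X(\|\dbar_h^*\alpha\|^2+\|\dbar\alpha\|^2)dV_\omega,$$
where $\gamma=\gamma_\alpha$ is the Hodge-$\ast$ of $\alpha$, an $E$-valued $(n-p,0)$-form.

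The key step is then to bound the left-hand side from below. The term $\int_X\|\dbar\gamma\|^2dV_\omega$ is manifestly nonnegative and can simply be discarded. For the curvature term, the hypothesis $i\Theta^h\geq_{Nak.}\delta\omega\otimes I$ together with inequality \eqref{eq:BasicIneq} gives pointwise
$$ic_{n-p}\langle\Theta\wedge\gamma,\gamma\rangle\wedge\omega_{p-1}\geq\delta p\|\gamma\|^2dV_\omega,$$
and integrating over $X$ yields $ic_{n-p}\int_X\langle\Theta\wedge\gamma,\gamma\rangle\wedge\omega_{p-1}\geq\delta p\int_X\|\gamma\|^2dV_\omega$. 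Finally, recall from Section~2 that $\|\gamma\|^2=\|\alpha\|^2$, since $\alpha=\gamma\wedge\omega_p$. Substituting these two observations into the identity above gives
$$\delta p\int_X\|\alpha\|^2dV_\omega\leq\int_X(\|\dbar_h^*\alpha\|^2+\|\dbar\alpha\|^2)dV_\omega,$$
which is exactly the claimed estimate.

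I do not anticipate a serious obstacle here: the corollary is a direct bookkeeping assembly of results already established in the excerpt. The one point that deserves a word of care is the passage from the pointwise inequality \eqref{eq:BasicIneq} to its integrated form; this is legitimate because $dV_\omega>0$ and the forms involved are smooth, so integration preserves the inequality. One should also note that although \eqref{eq:IntBasic} was proved under a compact-support assumption, compactness of $X$ makes this automatic, so no approximation or density argument is needed at this stage. (Extending the estimate from smooth forms to general elements of $\Dom(\dbar^*_h)$, in order to apply Theorem~\ref{thm:3.1} with $c=p\delta$ and conclude Theorem~\ref{thm:1.1}, is a separate density argument that would come afterward.)
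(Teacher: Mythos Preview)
Your proof is correct and follows essentially the same route as the paper: apply the integrated identity \eqref{eq:IntBasic}, drop the nonnegative term $\int_X\|\dbar\gamma\|^2\,dV_\omega$, invoke the pointwise Nakano estimate \eqref{eq:BasicIneq}, and use $\|\gamma\|^2=\|\alpha\|^2$. Your added remarks on why compact support is automatic and why the pointwise-to-integrated passage is valid are accurate and do not depart from the paper's argument.
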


\begin{proof}
Recall that being positively curved in the sense of Nakano means that there exists some $\delta>0$ such that
$$\sum_{j,k}(\Theta_{jk}s_j,s_k)_h\geq\delta\sum_j\|s_j\|^2_h$$
for any $n-$tuple $(s_1,\ldots,s_n)$ of sections of $E$. From (\ref{eq:BasicIneq}) we also know that this in particular implies that
$$ic_{n-p}\langle\Theta\wedge\gamma,\gamma\rangle\wedge\omega_{p-1}\geq\delta p\|\gamma\|^2dV_\omega$$
for any $E$-valued, $(n-p,0)$-form $\gamma$. Applying this to (\ref{eq:IntBasic}) and using the fact that $\|\gamma_\alpha\|^2=\|\alpha\|^2$ then yields the desired result.
\end{proof}

Now as already mentioned our strategy is to use Theorem \ref{thm:3.1}, and so we let $T$ denote the closed and densely defined operator $\dbar$ from $H_1=L^2_{(n,p-1)}(X,E,h)$ to $H_2=L^2_{(n,p)}(X,E,h)$ for $p\geq1$. What we need to show is that for some constant $c>0$
$$c\int_X\|\alpha\|^2dV_\omega\leq\int_X\|T^*\alpha\|^2dV_\omega$$
for all $\alpha\in Dom(T^*)\cap Ker(\dbar)$ where $T^*$ is the Hilbert space adjoint of $T$. In general $T^*$ is different from the formal adjoint $\dbar^*_h$ that we have studied so far, in that $T^*$ has the specific domain
$$Dom(T^*)=\Big\{\alpha\in H_2\ ; \Big|\!\!\int_X(\dbar u,\alpha)dV_\omega\Big|^2\leq C\int_X\|u\|^2dV_\omega\quad\forall u\in Dom(T)\Big\}$$
where $u\in Dom(T)$ means that $\dbar u$, in the sense of distributions, lies in $H_2$.

Note that any smooth, $E$-valued, $(n,p)$-form $\alpha$ lies in $Dom(T^*)$ and furthermore
$$T^*\alpha=\dbar^*_h\alpha.$$
This follows immediately from the definition of $Dom(T^*)$ since if $u\in H_1$ is such that $\dbar u\in H_2$, then for any smooth $\alpha\in H_2$
$$\int_X(\dbar u,\alpha)dV_\omega=\int_X(u,\dbar^*_h\alpha)dV_\omega$$
as this is precisely the definition of $\dbar u$ in the sense of distributions.

The remaining key ingredient is now given by the following important approximation lemma.

\begin{lma}\label{lma:appr}
If $\alpha\in Dom(T^*)$ is such that $\dbar\alpha\in L^2_{(n,p+1)}(X,E,h)$, then there exists a sequence $\{\alpha_k\}$ of smooth, $E$-valued, $(n,p)$-forms such that
$$\int_X\|\alpha_k-\alpha\|^2dV_\omega,$$
$$\int_X\|\dbar\alpha_k-\dbar\alpha\|^2dV_\omega$$
and
$$\int_X\|\dbar^*_h\alpha_k-T^*\alpha\|^2dV_\omega$$
all tend to zero.
\end{lma}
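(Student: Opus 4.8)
The statement is the standard density lemma for the Hilbert space adjoint $T^*$ of $\dbar$ on a compact manifold, and the plan is to follow the classical Friedrichs/Hörmander approach: approximate $\alpha$ by convolution with a mollifier in local coordinates, patched via a partition of unity, and control the three norms using a commutator (Friedrichs lemma) argument. Since $X$ is compact, there are no weight or completeness issues to worry about — only the delicate point that regularisation does not commute with the differential operators.

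\textbf{Step 1: Local reduction and mollification.} First I would fix a finite cover of $X$ by coordinate charts over which $E$ is trivial, together with a subordinate partition of unity $\{\rho_i\}$, and write $\alpha=\sum_i\rho_i\alpha$. It suffices to regularise each $\rho_i\alpha$, which has compact support in a chart, so we may work on an open set in $\C^n$ with $E$ trivialised; there $\alpha$ becomes an $(n,p)$-form with $L^2_{\loc}$ coefficients (vectors in $\C^r$), and the metric $h$ and $\omega$ become smooth matrix/form coefficients. Define $\alpha_k^{(i)}:=(\rho_i\alpha)*\chi_k$ where $\chi_k$ is a standard approximate identity, and set $\alpha_k:=\sum_i\alpha_k^{(i)}$. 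These are smooth and compactly supported, hence lie in $\Dom(T^*)$ with $T^*\alpha_k=\dbar^*_h\alpha_k$ as noted just before the lemma.

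\textbf{Step 2: Convergence of $\alpha_k$ and $\dbar\alpha_k$.} Convergence $\alpha_k\to\alpha$ in $L^2$ is immediate since convolution with $\chi_k$ converges to the identity in $L^2_{\loc}$ and the sum over $i$ is finite. For $\dbar\alpha_k$: because $\dbar$ has constant coefficients in the holomorphic trivialisation, $\dbar$ commutes with convolution, so $\dbar\alpha_k^{(i)}=(\dbar(\rho_i\alpha))*\chi_k=(\dbar\rho_i\wedge\alpha)*\chi_k+(\rho_i\dbar\alpha)*\chi_k$. Summing over $i$, the terms $\sum_i\dbar\rho_i\wedge\alpha$ telescope to $\dbar(\sum_i\rho_i)\wedge\alpha=0$ up to a convolution error that vanishes, and using $\dbar\alpha\in L^2$ one gets $\dbar\alpha_k\to\dbar\alpha$ in $L^2$.

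\textbf{Step 3: Convergence of $\dbar^*_h\alpha_k$ — the Friedrichs commutator.} This is the main obstacle. The formal adjoint $\dbar^*_h$ is a first-order operator with \emph{variable} coefficients (it involves the metric $h$ and $\omega$), so it does not commute with mollification, and one cannot directly say $\dbar^*_h\alpha_k$ converges to anything. The standard remedy is the Friedrichs lemma: for a first-order differential operator $P$ with smooth coefficients and $v\in L^2_{\loc}$ with $Pv\in L^2_{\loc}$, the commutator $[P,*\chi_k]v=P(v*\chi_k)-(Pv)*\chi_k$ tends to $0$ in $L^2_{\loc}$ as $k\to\infty$. Applying this with $P=\dbar^*_h$ (coefficientwise in the chart) and $v=\rho_i\alpha$ — which is legitimate precisely because $\alpha\in\Dom(T^*)$ guarantees $\dbar^*_h\alpha\in L^2$ in the sense that $T^*\alpha$ represents it — gives $\dbar^*_h\alpha_k^{(i)}=(\dbar^*_h(\rho_i\alpha))*\chi_k+o(1)$. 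Again one must check the partition-of-unity bracket terms: $\dbar^*_h(\rho_i\alpha)=\rho_i\dbar^*_h\alpha+(\text{zeroth order in }d\rho_i)\alpha$, and summing over $i$ the zeroth-order pieces assemble into an operator applied to $(\sum_i\rho_i)\alpha=\alpha$ which, because $\sum\rho_i\equiv1$ makes the total $d\rho_i$-contribution vanish, leaves only $\sum_i\rho_i\dbar^*_h\alpha=\dbar^*_h\alpha=T^*\alpha$. Hence $\dbar^*_h\alpha_k\to T^*\alpha$ in $L^2$. I would state the Friedrichs lemma as a cited fact (it is standard; cf. \cite{H}, \cite{D1}) rather than prove it, and the remaining work is the bookkeeping of the partition of unity and trivialisations, which is routine.
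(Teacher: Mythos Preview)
Your proposal is correct and follows essentially the same route as the paper: localise via a partition of unity into trivialising charts, mollify by convolution, and invoke the Friedrichs lemma for the convergence of $\dbar^*_h\alpha_k$. The only cosmetic difference is that the paper first isolates and proves the statement that $\varphi\alpha\in\Dom(T^*)$ for smooth $\varphi$ (directly from the definition of $\Dom(T^*)$), whereas you absorb this into the Leibniz-rule bookkeeping in Step~3; both are equivalent and the remaining details match.
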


\begin{proof}
We want to approximate $\alpha$ as usual by taking the convolution with a sequence of functions
$$\chi_k(z):=k^{2n}\chi(kz)$$
where $\chi$ is a smooth function with compact support and integral equal to 1. However, as $\alpha$ is $E$-valued, we first need to show that we may assume that $\alpha$ is supported in a coordinate neighborhood that trivializes $E$. This can be achieved through an appropriate partition of unity and so what we need to show is that if $\alpha\in Dom(T^*)$, and if $\varphi$ is a smooth, real valued function on $X$, then $\varphi\alpha$ also lies in $Dom(T^*)$.

By definition then, we need to prove that for any $u\in Dom(T)$
$$\Big|\int_X(\dbar u,\varphi\alpha)dV_\omega\Big|^2\leq C\int_X\|u\|^2dV_\omega.$$
But this follows from the fact that
{\setlength\arraycolsep{2pt}
\begin{eqnarray}
 \int_X(\dbar u,\varphi\alpha)dV_\omega & = & \int_X(\varphi\dbar u,\alpha)dV_\omega=\nonumber\\
& = & \int_X\big(\dbar(\varphi u),\alpha\big)dV_\omega-\int_X(\dbar\varphi\wedge u,\alpha)dV_\omega.\nonumber
\end{eqnarray}}
\!\!The first term is fine as $\alpha\in Dom(T^*)$ and the second term is fine as well since $\varphi$ is smooth, and hence it is dominated by the inner product of $u$ and $\alpha$. 

Hence we may assume that $E$ is trivial so that the section part of $\alpha$ is a vector of functions. Thus if $\alpha$ locally is of the form
$$\alpha=\sum_{|J|=p}\left[ \begin{array}{c} \alpha^1_J\\\vdots\\\alpha^r_J\end{array} \right]dz\wedge d\bar{z}_J$$
where $dz=dz_1\wedge\ldots\wedge dz_n$, we let
$$\alpha_k:=\sum_{|J|=p}\left[ \begin{array}{c} \alpha^1_J\ast\chi_k\\\vdots\\\alpha^r_J\ast\chi_k\end{array} \right]dz\wedge d\bar{z}_J.$$

Now it is a standard property of such convolutions that for any $f\in L^p$
$$f\ast\chi_k\to f\quad\textrm{in }\ L^p.$$
As our metric $h$ is smooth on $X$, which is compact, $h$ is bounded from above. Taken together these facts yield that
$$\int_X\|\alpha_k-\alpha\|^2dV_\omega\to0.$$
Furthermore, since $\dbar\alpha_k=\dbar\alpha\ast\chi_k$ it follows that $\dbar\alpha_k$ converges to $\dbar\alpha$ as well.

Only the convergence of $\dbar^*_h\alpha_k$ remains. This is much more involved than the previous cases but follows from a classical result known as the Friedrich lemma that we will omit (\cite{F2},\cite{H}).
\end{proof}

With Corollary \ref{cor:main} and Lemma \ref{lma:appr} at our disposal the proof of Theorem \ref{thm:1.1} is now immediate.

\begin{proof}[Proof of Theorem 1.1]
For $\alpha\in Dom(T^*)\cap Ker(\dbar)$ we let $\{\alpha_k\}$ be the approximating sequence given by Lemma \ref{lma:appr}. Then by Corollary \ref{cor:main} for any fix $k$
$$p\delta\int_X\|\alpha_k\|^2dV_\omega\leq\int_X(\|\dbar_h^*\alpha_k\|^2+\|\dbar\alpha_k\|^2)dV_\omega.$$
Letting $k\to\infty$ and using Lemma \ref{lma:appr} once again together with Theorem \ref{thm:3.1} finishes the proof.
\end{proof} 

\begin{remark}\label{remark:1}
It is possible to extend Theorem \ref{thm:1.1} to any complex manifold $X$ carrying some complete K\"ahler metric. Hence, in particular, Theorem \ref{thm:1.1} holds for Stein manifolds (see e.g. \cite{D1}, Chapter VIII, Theorem 4.5). However as the argument is exactly the same as in the line bundle setting, (c.f. \cite{B} section 3.6), we will refrain from doing this here.
\end{remark}

\section{Proof of Theorem \ref{thm:1.3}}
\noindent The goal of this section is to prove Theorem \ref{thm:1.3}, and so throughout this section $X$ denotes a Riemann surface, $E$ is a holomorphic vector bundle on $X$, and $h$ is a singular hermitian metric on $E$ that is positively curved in the sense of Definition \ref{df:2.3}. Just as in the line bundle setting the proof will be carried out in several steps.

We start by assuming that $X$ is a non-compact Riemann surface and that $E$ is a trivial bundle. It is a consequence of the Behnke-Stein approximation theorem (which e.g. can be found in \cite{F1}, section 25), that every non-compact Riemann surface is a Stein manifold. Hence we can without any loss of generality assume that $X$ is a submanifold of $\C^N$, as any Stein manifold can be properly imbedded in $\C^N$, for some $N$. Let $D$ be a relatively compact Stein subdomain of $X$. Since the dimension of $X$ is lower than $N$, we cannot convolute $h$ with an approximate identity on $D$ immediately. However if we let $D'$ be a larger Stein open subset of $X$ containing $D$ in its interior, then there exists a neighborhood $U$ of $D'$ in $\C^N$, such that $U$ is a holomorphic retract of $U$, i.e. there exists a holomorphic map $p$ from $U$ to $D'$ which is the identity on $D'$, (see e.g. \cite{S2}, Corollary 1).

We claim that $p^*h=h\circ p$ is a singular hermitian metric on $U$ that is also positively curved in the sense of Definition \ref{df:2.3}. This is a straightforward consequence of two well-known facts. The first is that the composite of a plurisubharmonic and a holomorphic function is again plurisubharmonic, which immediately yields property (i), and the second is that $p^*(\Theta^h)=\Theta^{p^*h}$, from which property (ii) follows.

Through the frequently utilized regularisation technique of convolution with an approximative identity, we obtain a sequence of smooth hermitian metrics $\{h_\nu\}$ approximating $p^*h$ on $U$. One can now proceed to show that this sequence is increasing pointwise to $p^*h$, and that each metric $h_\nu$ is strictly positively curved. This is basically the content of Proposition 5.1 in \cite{R1} and so we will not repeat the argument here.

There exists however a slight difference between the two cases which stems from the fact that in \cite{R1} it is $h$, and not $p^*h$, that is approximated. Thus in the former setting, the regularising metrics are shown to have the same lower bound $\delta\omega\otimes I$, which we from now on will normalize so that $\delta=1$, on their curvature tensors. In our setting the same argument yields that the curvature of $h_\nu$ is bounded from below by $(p^*(\omega)\ast\chi_\nu)\otimes I$, (instead of $(\omega\ast\chi_\nu)\otimes I$), where $\chi_\nu$ denotes the approximate identity and $\omega$ is a K\"ahler form.

Now as $p^*(\omega)$ is smooth, $p^*(\omega)\ast\chi_\nu$ will converge to $p^*(\omega)$ in the $C^\infty$-topology. By definition, this means that for large enough $\nu$,
$$p^*(\omega)\ast\chi_\nu-p^*(\omega)\geq-\varepsilon\omega',$$
for some $\varepsilon>0$ and some positive metric $\omega'$ on $U$. When restricted to $X$, $\omega'|_X$ will still be positive, and so there exists a constant $c>0$ such that $\omega'|_X\leq c\omega$. Hence on $X$
$$p^*(\omega)\ast\chi_\nu|_X\geq p^*(\omega)|_X-\varepsilon\omega'|_X\geq(1-\varepsilon)\omega,$$
where we have normalized so that $c=1$.

All in all we get that on $D$, $h$ can be approximated by a sequence $\{h_\nu\}$ of smooth hermitian metrics increasing pointwise to $h$, and furthermore that each $h_\nu$ is strictly positively curved, all with the same constant $1-\varepsilon$.

For fixed $\nu$, we can now apply Theorem \ref{thm:1.1} in the complete K\"ahler case (see Remark \ref{remark:1}), and conclude that on $D$, for any $E$-valued, $(1,1)$-form $f$, we can solve $\dbar u=f$ with $u=u_\nu$ satisfying
$$\int_D\|u_\nu\|^2_{\omega,h_\nu}dV_\omega\leq\frac{1}{1-\varepsilon}\int_D\|f\|^2_{\omega,h_\nu}dV_\omega\leq\frac{1}{1-\varepsilon}\int_D\|f\|^2_{\omega,h}dV_\omega.$$
Just as in the line bundle case we can now argue that since $\{h_\nu\}$ are increasing, for $\nu>\nu_0$
$$\int_D\|u_\nu\|^2_{\omega,h_{\nu_0}}dV_\omega\leq\frac{1}{1-\varepsilon}\int_D\|f\|^2_{\omega,h}dV_\omega.$$
Hence we can deduce that there exists a subsequence of $\{u_\nu\}$ converging weakly in $L^2_{(1,0)}(D,E,h_{\nu_0})$. Using a diagonal argument it follows that there exists a subsequence of $\{u_\nu\}$ converging weakly in $L^2_{(1,0)}(D,E,h_{\nu_0})$ for any $\nu_0$. If we denote the weak limit by $u_D$ we hence have that $u_D$ solves $\dbar u=f$ in $D$, and furthermore 
$$\int_D\|u_D\|^2_{\omega,h_{\nu_0}}dV_\omega\leq\frac{1}{1-\varepsilon}\int_D\|f\|^2_{\omega,h}dV_\omega.$$
Letting $\nu_0$ tend to infinity and using monotone convergence we get that $u_D$ satisfies the required estimate in $D$.

Lastly, let $\{D_j\}$ denote an exhaustion of $X$ where each $D_j$ is a relatively compact, Stein subdomain. Then on each $D_j$ we can, by the above, solve $\dbar u=f$ with
$$\int_{D_j}\|u_{D_j}\|^2_{\omega,h}dV_\omega\leq\frac{1}{1-\varepsilon}\int_{D_j}\|f\|^2_{\omega,h}dV_\omega\leq\frac{1}{1-\varepsilon}\int_X\|f\|^2_{\omega,h}dV_\omega.$$
Hence, we once again have a uniform bound on the left hand side. Letting $\varepsilon$ tend to zero, repeating the argument above with respect to $j$, (instead of $\nu$) and taking weak limits again, finishes the proof of Theorem \ref{thm:1.3} when $X$ is a non-compact Riemann surface and $E$ is trivial.

The next step is to generalise this to the case when $X$ is still non-compact, but we no longer require $E$ to be trivial. However since we are assuming that $X$ is a Riemann surface, as long as $X$ is non-compact, $E$ will always be trivial, since every holomorphic vector bundle on a non-compact Riemann surface is holomorphically trivial (see e.g. \cite{F1} Theorem 30.4).

Finally we have the case when $X$ is a compact Riemann surface. For this we will utilize the following lemma.

\begin{lma}
Let $X$ be a Riemann surface and let $S$ be a point in $X$. Let $u$ and $f$ be (possibly bundle valued) forms in $L^2_{loc}(X)$ satisfying $\dbar u=f$ outside of $S$. Then the same equation holds on all of $X$ (in the sense of distributions).
\end{lma}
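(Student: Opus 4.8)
The plan is to show that the $(1,1)$-current $T:=\dbar u-f$, which by hypothesis vanishes on $X\setminus\{S\}$, in fact vanishes on all of $X$. Since the assertion is local at $S$, I would first pass to a holomorphic coordinate $z$ on a neighbourhood $\Omega$ of $S$ with $z(S)=0$ and, in the bundle-valued case, to a holomorphic trivialisation of $E$ over $\Omega$; as $\dbar$ then acts componentwise it suffices to treat a single scalar component, so I may assume $u=u_0\,dz$ and $f=f_0\,dz\wedge d\bar z$ with $u_0,f_0\in L^2_{\loc}(\Omega)$, and with $\langle T,\psi\rangle=\int_\Omega u\wedge\dbar\psi-\int_\Omega\psi\,f$ for $\psi\in C_c^\infty(\Omega)$ (the defining pairing for ``$\dbar u=f$ in the sense of distributions''). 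The hypothesis says precisely that $\supp T\subseteq\{0\}$, and the goal is to prove $\langle T,\psi\rangle=0$ for every such $\psi$.

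The main device is a cut-off argument. Fix $\psi$ and, for small $\epsilon>0$, choose $\chi_\epsilon\in C_c^\infty(\Omega)$ with $\chi_\epsilon\equiv1$ on $\{|z|<\epsilon/2\}$, $\supp\chi_\epsilon\subset\{|z|<\epsilon\}$ and $|\dbar\chi_\epsilon|\le C/\epsilon$. Since $(1-\chi_\epsilon)\psi$ is supported away from $S$, we have $\langle T,(1-\chi_\epsilon)\psi\rangle=0$, hence $\langle T,\psi\rangle=\langle T,\chi_\epsilon\psi\rangle$ for all $\epsilon$; expanding with the Leibniz rule,
\[
\langle T,\chi_\epsilon\psi\rangle=\int_\Omega \psi\,u\wedge\dbar\chi_\epsilon+\int_\Omega \chi_\epsilon\,u\wedge\dbar\psi-\int_\Omega \chi_\epsilon\psi\,f.
\]
The second and third integrals tend to $0$ as $\epsilon\to0$ by dominated convergence, since $u\wedge\dbar\psi$ and $\psi f$ lie in $L^1_{\loc}(\Omega)$ (as $u_0,f_0\in L^2_{\loc}\subset L^1_{\loc}$ and $\psi$ is smooth) while $\chi_\epsilon\to0$ pointwise off $S$.

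The step I expect to be the obstacle is the first integral, and it is exactly here that the $L^2$ rather than merely $L^1$ hypothesis is used. On the annulus $A_\epsilon:=\{\epsilon/2<|z|<\epsilon\}$, which has area $\le C'\epsilon^2$, Cauchy--Schwarz gives
\[
\Big|\int_\Omega \psi\,u\wedge\dbar\chi_\epsilon\Big|\le \|\psi\|_\infty\frac{C}{\epsilon}\int_{A_\epsilon}|u_0|\,dA\le \|\psi\|_\infty\frac{C}{\epsilon}(C'\epsilon^2)^{1/2}\Big(\int_{A_\epsilon}|u_0|^2\,dA\Big)^{1/2}\le C''\|\psi\|_\infty\|u_0\|_{L^2(\{|z|<\epsilon\})},
\]
and the right side tends to $0$ since $u_0\in L^2_{\loc}(\Omega)$. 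Letting $\epsilon\to0$ yields $\langle T,\psi\rangle=0$, and as $\psi$ was arbitrary, $T=0$, i.e.\ $\dbar u=f$ on all of $X$ in the sense of distributions. A na\"ive bound on the first integral only gives $O\big(\epsilon^{-1}\int_{A_\epsilon}|u_0|\big)$, which need not vanish for merely integrable $u_0$; the factor $\epsilon$ from $|A_\epsilon|^{1/2}$ together with square-integrability is what saves the argument, and this is reflected in the $L^2_{\loc}$ hypothesis of the lemma.
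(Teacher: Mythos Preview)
Your argument is correct and is precisely the standard cutoff proof of this removable-singularity lemma. The paper does not give its own proof but refers to \cite{B}, Lemma~5.1.3, where the same $\chi_\epsilon$-cutoff argument is carried out (in the more general hypersurface case); your Cauchy--Schwarz estimate on the annulus $A_\epsilon$, yielding the factor $|A_\epsilon|^{1/2}\cdot\epsilon^{-1}=O(1)$ times $\|u_0\|_{L^2(\{|z|<\epsilon\})}\to0$, is exactly the mechanism used there and the reason the $L^2_{\loc}$ hypothesis is imposed.
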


A more general form of this lemma, (where $X$ is any complex manifold and $S$ is a hypersurface), is used in the line bundle setting. The fact that $X$ is one dimensional and $S$ is a point simplifies the proof slightly, but since the more general result is well-known (see e.g. \cite{B} Lemma 5.1.3), we will refrain from proving it here.

Now if $X$ is a compact Riemann surface, by removing a point, $S$, we get a non-compact Riemann surface. By our previous result, we can solve $\dbar u=f$ on $X\setminus S$, and the weighted $L^2$ estimates for $u$ shows that it is locally in unweighted $L^2$ on $X\setminus S$ as well, (recall that the metric $h$ is assumed to be continuous). Thus by the previous lemma, $u$ actually solves the $\dbar$-equation across $S$ too.

\begin{bibdiv}
\begin{biblist}

\bib{B}{article}{
   author={Berndtsson, Bo},
   title={An introduction to things $\overline\partial$},
   conference={
      title={Analytic and algebraic geometry},
   },
   book={
      series={IAS/Park City Math. Ser.},
      volume={17},
      publisher={Amer. Math. Soc.},
      place={Providence, RI},
   },
   date={2010},
   pages={7--76},
   %review={\MR{2743815 (2012c:32052)}},
}

\bib{BP}{article}{
   author={Berndtsson, Bo},
   author={P{\u{a}}un, Mihai},
   title={Bergman kernels and the pseudoeffectivity of relative canonical
   bundles},
   journal={Duke Math. J.},
   volume={145},
   date={2008},
   number={2},
   pages={341--378},
   %issn={0012-7094},
   %review={\MR{2449950 (2009k:32020)}},
   %doi={10.1215/00127094-2008-054},
}

\bib{C}{article}{
   author={de Cataldo, Mark Andrea A.},
   title={Singular Hermitian metrics on vector bundles},
   journal={J. Reine Angew. Math.},
   volume={502},
   date={1998},
   pages={93--122},
   %issn={0075-4102},
   %review={\MR{1647555 (2000c:32067)}},
   %doi={10.1515/crll.1998.091},
}

\bib{D1}{article}{
   author={Demailly, J-P.},
   title={Complex Analytic and Differential Geometry},
   status={Monograph},
   eprint={http://www-fourier.ujf-grenoble.fr/~demailly},
}

\bib{D2}{article}{
   author={Demailly, Jean-Pierre},
   title={Estimations $L^{2}$ pour l'op\'erateur $\bar \partial $ d'un
   fibr\'e vectoriel holomorphe semi-positif au-dessus d'une vari\'et\'e
   k\"ahl\'erienne compl\`ete},
   language={French},
   journal={Ann. Sci. \'Ecole Norm. Sup. (4)},
   volume={15},
   date={1982},
   number={3},
   pages={457--511},
   %issn={0012-9593},
   %review={\MR{690650 (85d:32057)}},
}

\bib{F1}{book}{
   author={Forster, Otto},
   title={Lectures on Riemann surfaces},
   series={Graduate Texts in Mathematics},
   volume={81},
   %note={Translated from the 1977 German original by Bruce Gilligan;
   %Reprint of the 1981 English translation},
   publisher={Springer-Verlag},
   place={New York},
   date={1991},
   pages={viii+254},
   isbn={0-387-90617-7},
   %review={\MR{1185074 (93h:30061)}},
}

\bib{F2}{article}{
   author={Friedrichs, K. O.},
   title={The identity of weak and strong extensions of differential
   operators},
   journal={Trans. Amer. Math. Soc.},
   volume={55},
   date={1944},
   pages={132--151},
   %issn={0002-9947},
   %review={\MR{0009701 (5,188b)}},
}

\bib{H}{article}{
   author={H{\"o}rmander, Lars},
   title={$L^{2}$ estimates and existence theorems for the $\bar \partial
   $\ operator},
   journal={Acta Math.},
   volume={113},
   date={1965},
   pages={89--152},
   %issn={0001-5962},
   %review={\MR{0179443 (31 \#3691)}},
}

\bib{N1}{article}{
   author={Nadel, Alan Michael},
   title={Multiplier ideal sheaves and K\"ahler-Einstein metrics of positive
   scalar curvature},
   journal={Ann. of Math. (2)},
   volume={132},
   date={1990},
   number={3},
   pages={549--596},
   %issn={0003-486X},
   %review={\MR{1078269 (92d:32038)}},
   %doi={10.2307/1971429},
}

\bib{N2}{article}{
   author={Nakano, Shigeo},
   title={On complex analytic vector bundles},
   journal={J. Math. Soc. Japan},
   volume={7},
   date={1955},
   pages={1--12},
   %issn={0025-5645},
   %review={\MR{0073263 (17,409b)}},
}

\bib{R1}{article}{
   author={Raufi, Hossein},
   title={Singular hermitian metrics on holomorphic vector bundles},
   %journal={Acta Math.},
   date={2012},
   status={Preprint},
   eprint={arXiv:1211.2948 [math.CV]},
   url={http://arxiv.org/abs/1211.2948}
}

\bib{R2}{article}{
   author={Raufi, Hossein},
   title={An extension theorem of Ohsawa-Takegoshi type for sections of a vector bundle},
   %journal={Acta Math.},
   status={work in progress},
   %eprint={arXiv:0910.3589 [math.CV]},
   %url={http://arxiv.org/abs/0910.3589}
}

\bib{S1}{article}{
   author={Siu, Yum Tong},
   title={Complex-analyticity of harmonic maps, vanishing and Lefschetz
   theorems},
   journal={J. Differential Geom.},
   volume={17},
   date={1982},
   number={1},
   pages={55--138},
   %issn={0022-040X},
   %review={\MR{658472 (83j:58039)}},
}

\bib{S2}{article}{
   author={Siu, Yum Tong},
   title={Every Stein subvariety admits a Stein neighborhood},
   journal={Invent. Math.},
   volume={38},
   date={1976/77},
   number={1},
   pages={89--100},
   %issn={0020-9910},
   %review={\MR{0435447 (55 \#8407)}},
}

\end{biblist}
\end{bibdiv}

\end{document}